\numberwithin{equation}{section}
\numberwithin{figure}{section}
\theoremstyle{plain}
\newtheorem{thm}{\protect\theoremname}[section]
  \theoremstyle{definition}
  \newtheorem{defn}[thm]{\protect\definitionname}
  \theoremstyle{plain}
  \newtheorem{prop}[thm]{\protect\propositionname}
  \theoremstyle{plain}
  \newtheorem{lem}[thm]{\protect\lemmaname}
  \theoremstyle{plain}
  \newtheorem{cor}[thm]{\protect\corollaryname}
  \theoremstyle{remark}
  \newtheorem{rem}[thm]{\protect\remarkname}
  \theoremstyle{definition}
  \newtheorem{example}[thm]{\protect\examplename}
  \providecommand{\corollaryname}{Corollary}
  \providecommand{\definitionname}{Definition}
  \providecommand{\examplename}{Example}
  \providecommand{\lemmaname}{Lemma}
  \providecommand{\propositionname}{Proposition}
  \providecommand{\remarkname}{Remark}
\providecommand{\theoremname}{Theorem}
\begin{document}

\title{Planar Infinite-Horizon Optimal Control Problems with Weighted Average
Cost and Averaged Constraints,\\ Applied to Cheeger Sets.}

\author{Ido Bright}

\address{Department of Applied Mathematics, University of Washington, Seattle, WA 98195 USA; ibright@uw.edu}
\begin{abstract}
We establish a Poincaré-Bendixson type result for a weighted averaged infinite horizon problem in the plane, with and without averaged constraints. For the unconstrained problem, we establish the existence of a periodic optimal solution, and for constrained problem, we establish the existence of an optimal solution that alternates cyclicly between a finite number of periodic curves, depending on the number of constraints. Applications of these results are presented to the shape optimization problems of the Cheeger set and the generalized Cheeger set, and also to a singular limit of the one-dimensional Cahn-Hilliard equation.
\end{abstract}

\subjclass{49J15, 49N20, 49Q10}

\keywords{Infinite-Horizon Optimization, Periodic Optimization, Averaged Constraint, Planar Cheeger Set, Singular Limit, Occupational Measures, Poincaré-Bendixson}

\maketitle

\section{Introduction}

\global\long\def\seq#1{#1_{1},#1_{2},\dots}

\global\long\def\seqf#1{#1_{1}\left(\cdot\right),#1_{2}\left(\cdot\right),\dots}

\global\long\def\seqfn#1#2{#1_{1}\left(\cdot\right),#1_{2}\left(\cdot\right),\dots,#1_{#2}\left(\cdot\right)}

\global\long\def\pd#1#2{#1_{1}\left(\cdot\right),#1_{2}\left(\cdot\right),\dots,#1_{#2}\left(\cdot\right)}

\global\long\def\pd{\left(\cdot\right)}

\global\long\def\sM{\mathcal{M}}

\global\long\def\bRb{\mathbb{R}}

\global\long\def\semiline{\left[0,\infty\right)}

\global\long\def\rinf#1{#1\rightarrow\infty}

\global\long\def\linf#1{\lim_{#1\rightarrow\infty}}

\global\long\def\liinf#1{\liminf_{#1\rightarrow\infty}}

\global\long\def\lsinf#1{\limsup_{#1\rightarrow\infty}}

The celebrated Poincare-Bendixson was originally stated for smooth
planar ordinary differential equations, but since has been extended
to flows, semi-flows and differential inclusions. (See, Ciesielski
\cite{ciesielski2002poincar,PoincaretotheXXIstcentury} for a review
on the Poincaré-Bendixson theorem and its generalizations.). In the
context of infinite horizon optimization, Poincaré-Bendixson type
results were obtained for second order Lagrangian by Leizarowitz and
Mizel \cite{LeizMizel}, for control systems by Artstein and Bright
\cite{artbright}, and for discounted infinite-horizon problems by
Colonius and Sieveking \cite{colonius1989asymptotic} and in Bright
\cite{Bright201284}.

In this paper we extend the infinite horizon result to a wider class
of values that include weighted average optimization, with an averaged
constraint. Two motivations for studying this problem are: The planar
shape optimization problem of the Cheeger set and its generalization;
and the van der Waals-Cahn-Hilliard theory of phase transition \cite{van1893verhandel,cahn1958free}. 

A planar Cheeger set in a domain $\Omega\subset R^{2}$ is a set $E\subset\Omega$,
that maximizes the ratio between its area and the length of its boundary.
One over the maximal ratio is the Cheeger constant of the problem.
The Cheeger constant is used to bound the first eigenvalue of the
Laplacian (see, Cheeger \cite{cheeger1970lower}), and generalized
Cheeger sets appear in the study of landslides (see, Ionescu and Lachand-Robert
\cite{ionescu2005generalized}). Average constraints arrises in the
study of phase transition, where the steady states of a partial differential
equations is the minimizer of an energy functional with a conserved
quantity (in this case the conservation of mass), and when a singular
limit is present, qualitative properties can be studied through the
solution of an infinite horizon problem with an averaged constraint.

The structure of this paper is as follows. The following section contains
definitions used throughout the paper, the standing assumptions and
some lemmas and previous results used in the proof of the main result.
In Section \ref{sec:MainResults} the main results are presented.
In Section \ref{sec:Applications}, we present applications of the
main result, establishing the reduction of the planar Cheeger problem,
and its generalization, to infinite horizon optimization, and the
singular limit of the Cahn-Hilliard equation to a constrained optimization.
The last section verifies the main result.

\section{\label{sec:Notations-&-Lemmas}Notations \& Assumptions \& Lemmas}

\subsection{General Notations and Assumptions}

The following notations are used throughout the paper. We denote the
set of Reals by $R$, vectors in the Euclidean space $R^{2}$ by $x=\left(x^{1},x^{2}\right)$
and $y=\left(y^{1},y^{2}\right)$, and the standard Euclidean norm
by $\left|\cdot\right|$. Given a metric space $X$, we denote its
probability space by $P\left(X\right)$. The action of a measure $\mu\in P\left(X\right)$
on a continuos function $g\left(\cdot\right)\in C\left(X\right)$
is denote by $\mu\left(g\right)=\int_{X}g\left(x\right)\mu\left(dx\right)$,
and given a set of measures $\mathcal{S}\subset P\left(X\right)$
and a function $g\left(\cdot\right)=\left(g^{1}\left(\cdot\right),\dots,g^{m}\left(\cdot\right)\right)\in C\left(X,R^{m}\right)$
we denote the set $\left\{ \mu\left(g\right)|\mu\in\mathcal{N}\right\} \subset R^{m}$
as its \textit{realization} by $g\left(\cdot\right)$.

Throughout the paper we assume that the control system
\begin{equation}
\frac{dx}{dt}=f\left(x,u\right),\label{eq:ODE}
\end{equation}
satisfies the following conditions: The function $f\left(x,t\right)$
is continuos, and it satisfies Lipschitz conditions in $x$. The constraint
set $K\subset R^{2}$ is compact, and so is the control set $U\subset R^{d}$
. We consider the optimization only with respect to solutions, with
measurable controls satisfying $u\left(t\right)\in U$, for almost
every $t\ge0$, defined on $\semiline$ and satisfying $x\left(t\right)\in K$
for every $t\ge0$. A solution satisfying the latter conditions is
denoted as \textit{feasible}. We assume that there exists at least
one feasible curve.

To obtain the convexity of the set of limiting measures (see Definition
\ref{def:Limiting measures}), we require the following controllability
assumption, that holds, for example, when for every $x_{0}\in K$
the convex hull of $\left\{ f\left(x_{0},u\right)|u\in U\right\} \subset R^{2}$
contains an open ball around the origin.
\begin{defn}
\label{def:unif_cont}The control system (\ref{eq:ODE}) is \textit{uniformly
controllable} in $K\subset R^{2}$, if there exists a $T_{K}>0$,
such that it is possible to steer between any two points $x_{1},x_{2}\in K$
in time less than $T_{K}$, while staying in the constraint set $K$.
\end{defn}

\subsection{Relaxed Controls}

A relaxed controls, say $\nu\left(\cdot\right)$, is a Young measure
(see, Young \cite{young1980lectures}), where for each time point
$\nu\left(t\right)\in P\left(U\right)$. With relaxed controls the
dynamics turn to $f\left(x,\nu\right)=\int_{U}f\left(x,u\right)\mathbf{\nu}\left(du\right)$
and the cost follows an equivalently representation.

Relaxed controls are introduced in this problem by limits of feasible
curves. Indeed, any sequence of solutions of (\ref{eq:ODE}) defined
in a common bounded interval, contains a subsequence converging, in
the sense of Young measures, to a solution of (\ref{eq:ODE}), perhaps,
with relaxed controls.

The main result establishes an optimal periodic or stationary solution
with relaxed controls. It is well known that in any bounded interval,
any solution of the relaxed system can be approximated using only
regular controls (see, Warga \cite[Chapter IV]{warga1972optimal}),
and in \cite[Section 7]{artbright} the approximation on an infinite
time domain is discussed.

\subsection{Occupational Measures\protect \\
}

We consider the probability space $P\left(K\times U\right)$ endowed
with the topology of weak convergence, where $\mu_{i}\rightarrow\mu$
weakly if $\lim_{i\rightarrow\infty}\mu_{i}\left(g\right)=\mu\left(g\right)$
for every $g\in C\left(K\times U\right)$. By our assumptions $K\times U$
is compact which implies that the set $P\left(K\times U\right)$ is
weakly compact, namely, every sequence contains a converging subsequence
converging  to a measure in $P\left(K\times U\right)$. (See, Billingsley
\cite[Chapter 1 ]{billingsley1968convergence})
\begin{defn}
We define the \textit{occupational measure} $\mu\in P\left(K\times U\right)$
corresponding to a curve $\left(x\left(\cdot\right),u\left(\cdot\right)\right)$
defined on $\left[0,T\right]$, by 
\[
\mu\left(A\right)=\frac{1}{T}\lambda\left(t\in\left[0,t\right]|\left(x\left(t\right),u\left(t\right)\right)\in A\right),
\]
for every Borel sets $A\subset K\times U$, where $\lambda$ is the
Lebesgue measure. Notice, that for every $g\in C\left(K\times U\right)$
\end{defn}
\[
\mu\left(g\right)=\int_{K\times U}g\left(x,u\right)\mu\left(dx,du\right)=\frac{1}{T}\int_{0}^{T}g\left(x\left(t\right),u\left(t\right)\right)dt.
\]

\begin{defn}
\label{def:Limiting measures}\label{def: Limitting_meas} We denote
$\mu$ as a \emph{limiting occupational measure} if there exists a
feasible solution $\left(x\left(\cdot\right),u\left(\cdot\right)\right)$
of (\ref{eq:ODE}), and a sequence $T_{i}\rightarrow\infty$ such
that the sequence of occupational measures corresponding to the restriction
of $\left(x\left(\cdot\right),u\left(\cdot\right)\right)$ to the
intervals $\left[0,T_{i}\right]$ converges weakly to $\mu$. The
set of all limiting occupational measures is denoted by $\mathcal{M}\subset P\left(K\times U\right)$.
\end{defn}
With an additional controllability assumption a simple diagonalization
argument implies that $\mathcal{M}$ satisfies the following property.
\begin{prop}
\label{prop:M_convex_compact}If system (\ref{eq:ODE}) is uniformly
controllable (see Definition \ref{def:unif_cont}) then $\mathcal{M}$
is convex and closed in the weak topology.
\end{prop}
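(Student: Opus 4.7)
The plan is to use uniform controllability to splice segments of distinct feasible trajectories into a single feasible trajectory in which the connector pieces carry asymptotically vanishing occupational mass, together with weak compactness of $P\left(K\times U\right)$ and a diagonal argument along a metric $d$ inducing the weak topology.

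For closedness, let $\mu_{n}\in\mathcal{M}$ with $\mu_{n}\to\mu$ weakly. For each $n$ pick a feasible $\left(x_{n}\left(\cdot\right),u_{n}\left(\cdot\right)\right)$ and a time $T_{n}$ so large that the occupational measure $\nu_{n}$ of this curve on $\left[0,T_{n}\right]$ lies within $d$-distance $1/n$ of $\mu_{n}$. Construct one feasible trajectory $\left(\tilde{x}\left(\cdot\right),\tilde{u}\left(\cdot\right)\right)$ block by block: at the end of block $n-1$ it sits at some $y_{n-1}\in K$; by Definition \ref{def:unif_cont} steer, while remaining in $K$, from $y_{n-1}$ to $x_{n}\left(0\right)$ in time $\tau_{n}\le T_{K}$, and then copy $\left(x_{n},u_{n}\right)$ exactly for time $T_{n}$. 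Let $S_{n}$ denote the total elapsed time after $n$ completed blocks, and choose the $T_{n}$ growing fast enough that $T_{n}/S_{n}\to 1$, for instance requiring $T_{n}\ge n\left(S_{n-1}+T_{K}\right)$. Since occupational measures are invariant under time translation of the copied piece, the occupational measure of $\left(\tilde{x},\tilde{u}\right)$ on $\left[0,S_{n}\right]$ decomposes as $\left(T_{n}/S_{n}\right)\nu_{n}$ plus a nonnegative measure of total mass $1-T_{n}/S_{n}\to 0$. Since $\nu_{n}\to\mu$ weakly, the full occupational measure converges weakly to $\mu$, so $\mu\in\mathcal{M}$.

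For convexity, take $\mu_{1},\mu_{2}\in\mathcal{M}$ and $\alpha\in\left(0,1\right)$ (the endpoints are immediate), choose realizing feasible curves $\left(x_{j}\left(\cdot\right),u_{j}\left(\cdot\right)\right)$ and lengths $T_{n,j}\to\infty$ such that the corresponding occupational measures $\nu_{n,j}$ converge weakly to $\mu_{j}$ while $T_{n,1}/T_{n,2}\to\alpha/\left(1-\alpha\right)$. Splice these into a single feasible trajectory alternating a $T_{n,1}$-long block of $\left(x_{1},u_{1}\right)$, a controllability connector of length $\le T_{K}$, a $T_{n,2}$-long block of $\left(x_{2},u_{2}\right)$, another connector, and so on. At the end of the $n$-th complete pair the occupational measure is a convex combination of $\nu_{1,1},\dots,\nu_{n,1},\nu_{1,2},\dots,\nu_{n,2}$ with weights dominated by the last pair and tending to $\alpha$ on the first coordinate and $1-\alpha$ on the second, plus a remainder of total mass at most $2nT_{K}/\sum_{m\le n}\left(T_{m,1}+T_{m,2}\right)$, which vanishes provided the $T_{n,j}$ grow, say, geometrically. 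Passing $n\to\infty$ yields $\alpha\mu_{1}+\left(1-\alpha\right)\mu_{2}\in\mathcal{M}$.

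The main technical point, in both halves, is the scheduling of block lengths so that the $O\left(T_{K}\right)$ contributions from the connectors are swamped by the normalization $S_{n}$ and do not perturb the weak limit; this is exactly what the uniform controllability hypothesis buys. The remaining ingredients — weak compactness and metrizability of the weak topology on $P\left(K\times U\right)$, and the invariance of an occupational measure under time-translation of the underlying trajectory piece — are standard.
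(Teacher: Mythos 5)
The paper itself offers no proof of Proposition \ref{prop:M_convex_compact} (it only asserts that "a simple diagonalization argument" gives it, deferring in effect to \cite{artbright}), so there is nothing to compare against line by line; judged on its own, your closedness argument is correct and is exactly the intended diagonalization: splice the approximating blocks with connectors of length at most $T_{K}$, let the $n$-th block dominate the elapsed time $S_{n}$, and note that the connector mass $1-T_{n}/S_{n}$ vanishes. The convexity half, however, has a genuine gap at its very first step. You "choose realizing feasible curves $\left(x_{j},u_{j}\right)$ and lengths $T_{n,j}\to\infty$ such that $\nu_{n,j}\to\mu_{j}$ weakly while $T_{n,1}/T_{n,2}\to\alpha/\left(1-\alpha\right)$." These two requirements are coupled and cannot in general be met simultaneously: by Definition \ref{def:Limiting measures} the times at which the occupational measure of the \emph{fixed} curve $\left(x_{j},u_{j}\right)$ is close to $\mu_{j}$ are handed to you as some unbounded sequence $T_{i}^{\left(j\right)}$, enlarged only by small multiplicative neighborhoods (since $\left|\nu_{T'}\left(g\right)-\nu_{T}\left(g\right)\right|\le 2\left\Vert g\right\Vert \left|T'-T\right|/T$), and the set of ratios $T_{i}^{\left(1\right)}/T_{i'}^{\left(2\right)}$ of members of two prescribed unbounded sequences need not accumulate at $\alpha/\left(1-\alpha\right)$. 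Nothing prevents both good-time sets from being contained in $\bigcup_{i}\left[T_{i},\left(1+\epsilon\right)T_{i}\right]$ with $T_{i+1}/T_{i}$ enormous, in which case every achievable ratio is either within $\left(1+\epsilon\right)^{\pm2}$ of $1$ or very far from $1$, and a target such as $3/2$ is simply unattainable. So the scheduling you rely on, one block of each curve per stage with individually tuned lengths, cannot be realized.

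The repair is standard and worth stating, because it is precisely where uniform controllability is used a second time: at stage $n$, traverse the restriction of $\left(x_{1},u_{1}\right)$ to $\left[0,T_{i_{n}}^{\left(1\right)}\right]$ not once but $k_{n}$ times, steering back to $x_{1}\left(0\right)$ after each pass in time at most $T_{K}$, and then traverse the restriction of $\left(x_{2},u_{2}\right)$ to $\left[0,T_{i_{n}}^{\left(2\right)}\right]$ $l_{n}$ times. Each pass contributes the \emph{same} occupational measure $\nu_{i_{n},j}$, the connector mass is at most $\left(k_{n}+l_{n}\right)T_{K}$ against a block time of $k_{n}T_{i_{n}}^{\left(1\right)}+l_{n}T_{i_{n}}^{\left(2\right)}$ and hence negligible, and the integers $k_{n},l_{n}$ are free parameters with which $k_{n}T_{i_{n}}^{\left(1\right)}/\left(k_{n}T_{i_{n}}^{\left(1\right)}+l_{n}T_{i_{n}}^{\left(2\right)}\right)$ can be driven to $\alpha$ while letting stage $n$ dominate all earlier stages. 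With that modification your Toeplitz-type averaging goes through and yields $\alpha\mu_{1}+\left(1-\alpha\right)\mu_{2}\in\mathcal{M}$. Everything else in your write-up (metrizability and compactness of $P\left(K\times U\right)$, translation invariance of the copied blocks via uniqueness of solutions under the Lipschitz hypothesis) is fine.
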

We now state a result from \cite[Lemma 8.2]{artbright} on the set
of occupational measures.
\begin{thm}
\label{thm:occ_meas_realization}Suppose $g:K\times U\rightarrow R^{n}$
is continuous. Let
\[
\Delta=\left\{ \mu\left(g\right)|\mu\in\mathcal{M}\right\} ,
\]
be a realization of the set of limiting occupational measures. Every
extreme point of the (not necessarily convex) set $\Delta$ corresponds
to a stationary solution or a periodic solution with image being a
Jordan curve. Namely, given an extreme point $y\in\Delta$, there
exists a feasible pair $\left(x_{p}\left(\cdot\right),u_{p}\left(\cdot\right)\right)$
defined on $\left[0,T_{p}\right]$, possibly, with relaxed controls,
with corresponding measure $\mu_{p}$ such that $x\left(0\right)=x\left(T_{p}\right)$
and
\[
y=\mu_{p}\left(g\right)=\int_{K\times U}g\left(x,u\right)\mu_{p}\left(dx,du\right)=\frac{1}{T_{p}}\int_{0}^{T}g\left(x_{p}\left(t\right),u_{p}\left(t\right)\right)dt.
\]

\end{thm}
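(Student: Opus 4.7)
The plan is to lift extremality from the realization $\Delta$ up to the set of limiting measures $\mathcal{M}$, and then to use planar topology (Jordan curve theorem combined with a Poincar\'e--Bendixson-style surgery) to identify extreme points of $\mathcal{M}$ with stationary or periodic occupational measures.

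First, given an extreme point $y\in\Delta$, consider the fibre
\[
F_{y}=\{\mu\in\mathcal{M}\mid\mu(g)=y\}.
\]
Since $K\times U$ is compact, $P(K\times U)$ is weakly compact; by Proposition \ref{prop:M_convex_compact}, $\mathcal{M}$ is weakly closed and convex, hence weakly compact and convex. The map $\mu\mapsto\mu(g)$ is affine and weakly continuous, so $F_{y}$ is weakly compact and convex. By Krein--Milman, $F_{y}$ has an extreme point $\mu_{p}$, and $\mu_{p}$ is in fact extreme in the larger set $\mathcal{M}$: if $\mu_{p}=\tfrac{1}{2}(\mu_{1}+\mu_{2})$ with $\mu_{i}\in\mathcal{M}$, then $y=\tfrac{1}{2}(\mu_{1}(g)+\mu_{2}(g))$ lies in $\Delta$; extremality of $y$ in $\Delta$ forces $\mu_{1}(g)=\mu_{2}(g)=y$, so $\mu_{i}\in F_{y}$, and extremality of $\mu_{p}$ within $F_{y}$ gives $\mu_{1}=\mu_{2}=\mu_{p}$.

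It now suffices to show that every extreme $\mu_{p}\in\mathcal{M}$ is either a Dirac mass at a stationary pair $(x_{0},u_{0})$ with $f(x_{0},u_{0})=0$, or the occupational measure of a periodic solution whose image is a Jordan curve. By Definition \ref{def: Limitting_meas}, $\mu_{p}$ is the weak limit along $T_{i}\to\infty$ of occupational measures of some feasible $(x(\cdot),u(\cdot))$. If the support of $\mu_{p}$ collapses to a single point $(x_{0},u_{0})$, then
\[
\frac{x(T_{i})-x(0)}{T_{i}}=\frac{1}{T_{i}}\int_{0}^{T_{i}}f(x(t),u(t))\,dt\longrightarrow f(x_{0},u_{0}),
\]
and the left side vanishes because $K$ is bounded, yielding $f(x_{0},u_{0})=0$ and a stationary solution. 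Otherwise, planarity is used: at a self-intersection $x(s)=x(s')$ the loop $x|_{[s,s']}$ is, after possible further surgery at inner crossings, a Jordan curve whose occupational measure is itself an element of $\mathcal{M}$. Iterating this excision along $[0,T_{i}]$ and passing to the weak limit writes $\mu_{p}$ as a convex combination of occupational measures of periodic solutions with Jordan-curve image. Extremality of $\mu_{p}$ in $\mathcal{M}$ then forces the combination to be trivial, so $\mu_{p}$ is itself such a measure.

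The main obstacle lies in the surgery step: one must control the portion of $x|_{[0,T_{i}]}$ that is not absorbed into closed sub-loops and show that this "tail'' contributes vanishing mass to $\mu_{p}$ in the weak limit, so that the extracted decomposition genuinely realises $\mu_{p}$ as a convex combination inside $\mathcal{M}$ rather than only of a proper approximation. Planarity is essential here, because the Jordan curve theorem organises the self-intersections into a tractable nested structure and guarantees that each excised loop bounds a disc; in dimension three or higher there is no analogous mechanism for producing a single closed orbit, which is why the theorem is specifically planar.
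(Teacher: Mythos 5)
The paper does not actually prove this statement; it is imported verbatim from \cite[Lemma 8.2]{artbright}, so there is no internal proof to compare against. Judged on its own terms, your sketch contains two genuine gaps. First, your lifting step relies on $\mathcal{M}$ being convex and weakly closed so that the fibre $F_{y}$ is compact and convex and Krein--Milman applies. But Proposition \ref{prop:M_convex_compact} delivers convexity and closedness only under the uniform controllability hypothesis of Definition \ref{def:unif_cont}, which Theorem \ref{thm:occ_meas_realization} does not assume --- the statement explicitly describes $\Delta$ as ``not necessarily convex'' precisely because $\mathcal{M}$ need not be convex here. Without convexity of $F_{y}$ you cannot extract an extreme point of $F_{y}$ by Krein--Milman, and the reduction to ``extreme points of $\mathcal{M}$'' collapses. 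The actual argument must work directly with the extreme point $y$ of the finite-dimensional set $\Delta\subset R^{n}$ and with the trajectory realizing it, not with an abstract extreme point of the measure set.

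Second, in the surgery step the decomposition of $x|_{[0,T_{i}]}$ into Jordan sub-loops produces, in the limit, a convex combination (in general an integral average) of occupational measures of loops whose \emph{periods need not stay bounded}. A weak limit of occupational measures of Jordan loops with periods tending to infinity is merely another limiting occupational measure, not necessarily the occupational measure of a single periodic or stationary solution; this is exactly the degenerate case that the cited lemma must handle (and where the stationary alternative in the conclusion comes from). You flag the ``tail'' of the decomposition as the main obstacle, but the unbounded-period issue is at least as serious and is not addressed at all: extremality of $y$ in $\Delta$ forces the loop realizations to concentrate near $y$, but one still needs a compactness argument showing that realizations near an extreme point are attained by loops of uniformly bounded period or else degenerate to a rest point with $f(x_{0},\nu_{0})=0$. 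Your stationary case is handled only when the support of $\mu_{p}$ is a single point, which does not cover this degeneration. As it stands the proposal is a correct outline of the strategy of \cite{artbright} but does not close either of these steps.
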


\subsection{Convexity Notations and Lemmas\protect \\
}

We use the following notation and lemmas.
\begin{defn}
Let $\Delta\subset R^{n}$. We denote $H:\Delta\rightarrow R$ a quasi-convex
function if for every $z_{1},z_{2}\in\Delta$ 
\[
\max\left\{ H\left(\left(1-\lambda\right)z_{1}+\lambda z_{2}\right)|0\le\lambda\le1\right\} =\max\left\{ H\left(z_{1}\right),H\left(z_{2}\right)\right\} .
\]
It is denoted quasi-concave if for every $z_{1},z_{2}\in\Delta$ 
\[
\min\left\{ H\left(\left(1-\lambda\right)z_{1}+\lambda z_{2}\right)|0\le\lambda\le1\right\} =\min\left\{ H\left(z_{1}\right),H\left(z_{2}\right)\right\} .
\]
\end{defn}
\begin{lem}
Suppose $\Delta\subset R^{n}$ is a convex set and $\Pi\subset R^{n}$
is an affine subspace of dimension $n-d$. If the set $\Pi\cap\Delta$
is non-empty then each of its extreme points can be represented as
 a convex combination of $d+1$ extreme points of $\Delta$.\end{lem}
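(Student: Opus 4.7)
The plan is to locate $y$ inside the minimal face of $\Delta$ that contains it, bound the dimension of that face by $d$ via a transversality argument, and then invoke Caratheodory's theorem within the face. Throughout, I am implicitly assuming $\Delta$ is closed and bounded, as will be the case in the intended applications; otherwise the notion of extreme point of $\Delta$ need not be rich enough for the conclusion to make sense.

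First, let $F$ denote the smallest face of $\Delta$ that contains the given extreme point $y$ of $\Pi\cap\Delta$. It is standard that $y$ then lies in the relative interior of $F$. Write $V$ for the affine hull of $F$ and set $k=\dim F=\dim V$.

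Second, I claim $k\le d$. The two affine subspaces $V$ and $\Pi$ of $R^{n}$ have dimensions $k$ and $n-d$, so $\dim(V\cap\Pi)\ge k+(n-d)-n=k-d$ whenever the intersection is non-empty; here it contains $y$, so it is non-empty. If $k>d$, then $V\cap\Pi$ contains an affine line through $y$, and since $y$ is in the relative interior of $F$ (taken relative to $V$), a short symmetric segment of this line about $y$ lies inside $F\cap\Pi\subset\Delta\cap\Pi$. This exhibits $y$ as the midpoint of two distinct points of $\Delta\cap\Pi$, contradicting its extremality. Hence $k\le d$.

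Third, apply Caratheodory's theorem inside $F$, which is a convex set of dimension at most $d$: the point $y$ is a convex combination of at most $d+1$ extreme points of $F$. Since $F$ is a face of $\Delta$, every extreme point of $F$ is also an extreme point of $\Delta$, which gives the desired representation.

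The main obstacle is the dimension-counting step: turning the purely affine inequality $\dim(V\cap\Pi)\ge k-d$ into an actual segment inside $\Delta\cap\Pi$ requires that $y$ genuinely sits in the relative interior of $F$, so that moving a little along a direction in $V$ keeps us inside $F$. Once the minimal face is isolated and this relative-interior fact is invoked, the remaining steps (Caratheodory inside $F$ and the face-inheritance of extreme points) are routine.
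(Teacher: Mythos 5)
Your proof is correct and rests on the same dimension-counting argument as the paper's: a convex subset of $\Delta$ of dimension exceeding $d$ containing $y$ in its relative interior must meet the codimension-$d$ affine subspace $\Pi$ in a nondegenerate segment centered at $y$, contradicting extremality. You package this via the minimal face of $\Delta$ containing $y$ and then apply Caratheodory within that face, whereas the paper applies the same count directly to the simplex spanned by a minimal Caratheodory representation of $y$ by extreme points of $\Delta$; both versions rely on the same implicit compactness assumption needed for the extreme-point form of Caratheodory's theorem, which holds in the intended application.
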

\begin{proof}
By Caratheodory's theorem, every point in $\Delta$ is a convex combination
of $n+1$ of its extreme points. If $y\in\Pi\cap\Delta$ can be expressed
as a convex combination of $k>d+1$ distinct extreme points of $\Delta$
with non-zero coefficients, then, locally, $\Delta$ contains a convex
set of dimension $k-1>d$ containing $y$ in its relative interior.
The intersection of the latter set with $\Pi$ contains a line centered
at $y$, thus $y$ is not an extreme point of $\Pi\cap\Delta$.\end{proof}
\begin{cor}
\label{Cor:cnv_intesect_halfspace_lemma}Suppose $\Delta\subset R^{n}$
is a convex set and $\Pi_{1},\dots,\Pi_{m}\subset R^{n}$ are affine
subspaces of dimension $n-d$. If the set $\Delta\cap\Pi_{1}\cap\cdots\cap\Pi_{m}$
is non-empty then each of its extreme points can be represented as
a convex combination of $d*m+1$ extreme points of $\Delta$.
\end{cor}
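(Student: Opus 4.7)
The plan is to reduce the corollary to the single-subspace lemma by observing that an intersection of affine subspaces is again an affine subspace, then bounding its codimension.

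First I would set $\Pi = \Pi_1 \cap \cdots \cap \Pi_m$ and note that $\Pi$ is itself an affine subspace of $R^n$. Since each $\Pi_i$ has dimension $n-d$, i.e.\ codimension $d$, and codimensions are subadditive under intersection, the codimension $d^\ast$ of $\Pi$ satisfies $d^\ast \le dm$; equivalently $\Pi$ has dimension at least $n - dm$. By hypothesis $\Delta \cap \Pi$ is nonempty.

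Next I would apply the preceding lemma directly to the convex set $\Delta$ and the single affine subspace $\Pi$ of codimension $d^\ast$. The lemma yields that every extreme point of $\Delta \cap \Pi$ is a convex combination of $d^\ast + 1$ extreme points of $\Delta$. Since $d^\ast + 1 \le dm + 1$, one may always pad the combination with zero-weight copies (or trivially restate the bound as an upper bound), giving the claimed representation by $dm + 1$ extreme points of $\Delta$.

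The proof really contains no obstacle beyond the codimension bookkeeping: the essential geometric fact, that an extreme point lying in a low-codimension affine slice cannot be an interior point of a high-dimensional sub-face of $\Delta$, has already been extracted in the previous lemma. An alternative would be induction on $m$, but this runs into the difficulty that extreme points of $\Delta \cap \Pi_1 \cap \cdots \cap \Pi_m$ need not be extreme points of $\Delta \cap \Pi_1 \cap \cdots \cap \Pi_{m-1}$, so the inductive hypothesis cannot be chained; the direct reduction via $\Pi_1 \cap \cdots \cap \Pi_m$ avoids this issue entirely.
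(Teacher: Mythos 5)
Your proof is correct and is precisely the intended derivation: the paper states this as an immediate corollary of the preceding lemma, and your reduction (intersecting the $\Pi_i$ into a single non-empty affine subspace of codimension at most $dm$, then invoking the lemma and padding up to $dm+1$ points) is the evident route. The codimension bookkeeping and the observation that the lemma's argument only improves when the actual codimension $d^{\ast}$ is smaller than $dm$ are both sound.
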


\section{\label{sec:MainResults}Main Results}

Our main result establishes a Poincaré-Bendixson type results for
a wide class of averaged infinite-horizon optimization problems. The
main problem we consider is the optimization of the ratio of integrals.
We present our main results for this value, but they hold for a wider
class of optimization problems. (See Remark \ref{rem:gen_val} and
Theorem \ref{thm:GEN_THM}.). 

We wish to minimize (or maximize, see, Remark \ref{rem:min-max})
\begin{equation}
v^{*}=\limsup_{T\rightarrow\infty}\frac{\int_{0}^{T}p\left(x\left(t\right),u\left(t\right)\right)dt}{\int_{0}^{T}q\left(x\left(t\right),u\left(t\right)\right)dt}\label{eq:Profit-Marg}
\end{equation}

with respect to all feasible solutions of Equation (\ref{eq:ODE}).

Two type of constraints are considered:
\begin{enumerate}
\item Cumulative constraint: Given a single continuous function $C_{1}\left(x,u\right)$
, we consider only feasible solutions satisfying 
\begin{equation}
\int_{0}^{T}C_{1}\left(x\left(t\right),u\left(t\right)\right)dt\le0,\qquad\forall T\ge0.\label{eq:Constraint}
\end{equation}

\item Averaged constraints: Given $m\ge1$ continuous functions $C_{1}\left(x,u\right),\dots,C_{m}\left(x,u\right)$
we consider only feasible solutions which for every $k=1,\dots,m$
satisfy either\\
\begin{subequations}
\begin{equation}
\limsup_{T\rightarrow\infty}\frac{1}{T}\int_{0}^{T}C_{k}\left(x\left(t\right),u\left(t\right)\right)dt\le0,\label{eq:m-constraints_ineq}
\end{equation}
or
\begin{equation}
\lim_{T\rightarrow\infty}\frac{1}{T}\int_{0}^{T}C_{k}\left(x\left(t\right),u\left(t\right)\right)dt=0.\label{eq:m-constraints_eq}
\end{equation}
\end{subequations} 
\end{enumerate}
For the unconstrained problem, we extend a previous result in \cite{artbright},
establishing the existence of a periodic optimal solution. For the
constrained optimization problem, given by $m$ constraints, we establish
the existence of a solution that is either stationary or periodic
solution, or alternates between, at most, $m+1$ such solutions, in
a cyclic manner.

We now state our main results.
\begin{thm}
\label{thm:main theorem}Suppose $p\left(x,u\right)$ and $q\left(x,u\right)$
are continuous, and $q\left(x,u\right)$ is positive. The minimization
of (\ref{eq:Profit-Marg}) restricted to feasible solutions of (\ref{eq:ODE})
is attained by a stationary solution, or by a periodic solution with
image being a Jordan curve. Perhaps, using relaxed controls.
\end{thm}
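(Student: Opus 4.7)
The plan is to recast the problem as a two-dimensional convex optimization on the realization
$$\Delta := \{(\mu(p),\mu(q)) \mid \mu \in \mathcal{M}\} \subset R^{2},$$
and then invoke Theorem \ref{thm:occ_meas_realization} to read off a periodic or stationary solution. First I would check that $\Delta$ is convex and compact: convexity of $\mathcal{M}$ (Proposition \ref{prop:M_convex_compact}) together with the fact that $\mu \mapsto (\mu(p),\mu(q))$ is affine and weakly continuous give both properties. Because $q$ is continuous and strictly positive on the compact set $K \times U$, there exists $\alpha > 0$ with $q \ge \alpha$, so $\mu(q) \ge \alpha$ for every $\mu \in \mathcal{M}$ and the function $F(y_{1},y_{2}) := y_{1}/y_{2}$ is continuous on $\Delta$.

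The next step is to argue $\inf v^{*} = \inf_{\Delta} F$, where the left-hand infimum is over feasible solutions of (\ref{eq:ODE}). For the inequality $\ge$, take a feasible solution and select $T_{i} \to \infty$ realizing its $\limsup$ in (\ref{eq:Profit-Marg}); by compactness, pass to a subsequence along which the three objects --- the averages of $p$, of $q$, and the occupational measures on $[0,T_{i}]$ --- all converge, the last to some $\mu \in \mathcal{M}$. Then $v^{*} = \mu(p)/\mu(q) \ge \inf_{\Delta} F$. The reverse inequality, together with attainment, will fall out of the extreme-point construction below.

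The key observation is that $F(y_{1},y_{2}) = y_{1}/y_{2}$ is quasi-linear on $\{y_{2} > 0\}$: its level sets are lines through the origin, hence $F$ is simultaneously quasi-convex and quasi-concave. Therefore its minimum on the convex compact set $\Delta$ is attained at some extreme point $y^{*}$ (a quasi-convex function on a compact convex set attains its maximum at an extreme point by a Carath\'eodory argument; applying this to $-F$, which is also quasi-convex, gives the claim for the minimum). Applying Theorem \ref{thm:occ_meas_realization} with $g = (p,q)$, the point $y^{*}$ corresponds to a feasible stationary or periodic (Jordan curve) solution $(x_{p}(\cdot), u_{p}(\cdot))$ --- possibly with relaxed controls --- whose occupational measure $\mu_{p}$ satisfies $(\mu_{p}(p), \mu_{p}(q)) = y^{*}$. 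Since this solution is genuinely stationary or periodic, the ratio in (\ref{eq:Profit-Marg}) converges as a true limit to $y_{1}^{*}/y_{2}^{*}$, so its $v^{*}$ equals $\inf_{\Delta} F$; combined with the second paragraph, it realizes the infimum.

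I expect the main obstacle to be precisely the conversion between the $\limsup$-defined value of a generic feasible solution and the limit-type ratio $\mu(p)/\mu(q)$ attached to a limiting occupational measure: an arbitrary $\mu \in \mathcal{M}$ need not be produced by a feasible solution whose $\limsup$ equals $\mu(p)/\mu(q)$, only by one whose ratio attains this value along a subsequence. The resolution is that the extreme-point minimizer furnished by Theorem \ref{thm:occ_meas_realization} is an actual periodic or stationary orbit, on which the $\limsup$ collapses to a true limit, closing the gap between the lower bound and the attained value.
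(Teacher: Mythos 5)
Your proof is correct and follows essentially the same route as the paper: restate the value as $\inf_{\mu\in\mathcal{M}}\mu(p)/\mu(q)$, observe that the ratio $z_{1}/z_{2}$ is quasi-concave (indeed quasi-linear) on the realization $\Delta$ where $z_{2}>0$, locate the minimum at an extreme point, and invoke Theorem \ref{thm:occ_meas_realization} to identify that extreme point with a stationary or periodic Jordan-curve solution; this is exactly the paper's reduction to part (1) of Theorem \ref{thm:GEN_THM}. The only differences are cosmetic: you route convexity of $\Delta$ through Proposition \ref{prop:M_convex_compact} (which formally invokes uniform controllability, not needed for the unconstrained case since Theorem \ref{thm:occ_meas_realization} already handles extreme points of the possibly non-convex $\Delta$), and you spell out the $\limsup$-to-limit bookkeeping that the paper leaves implicit.
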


\begin{thm}
\label{thm:main constraints}Suppose $p\left(x,u\right),q\left(x,u\right),C_{1}\left(x,u\right),\dots,C_{m}\left(x,u\right)$
are continuous, and $q\left(x,u\right)$ is positive. Consider the
minimization of (\ref{eq:Profit-Marg}) restricted to feasible solutions
of (\ref{eq:ODE}), satisfying $m$ averaged constraints of the form
(\ref{eq:m-constraints_ineq}) or (\ref{eq:m-constraints_eq}). If
there exists at least one feasible solution satisfying all the constraints,
then the minimum is attained by a curve that alternates cyclicly between
$m+1$ solutions of (\ref{eq:ODE}), each of which is either a periodic
solution with image being a Jordan curve, or a stationary solution.
Perhaps, using relaxed controls.

Moreover, when $m=1$, a stronger constraint of the form (\ref{eq:Constraint})
can be attained in a similar manner.
\end{thm}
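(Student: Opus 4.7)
The plan is to lift the problem from curves to measures and combine Corollary \ref{Cor:cnv_intesect_halfspace_lemma} with Theorem \ref{thm:occ_meas_realization}. Set $g=(p,q,C_{1},\ldots,C_{m})$ and let $\Delta=\{\mu(g):\mu\in\mathcal{M}\}\subset R^{m+2}$ be the realization of the limiting occupational measures, with generic element $(P,Q,c_{1},\ldots,c_{m})$. Proposition \ref{prop:M_convex_compact} together with the weak compactness of $P(K\times U)$ makes $\Delta$ a convex compact subset of $R^{m+2}$. Translate the averaged constraints into $c_{k}\le 0$ (resp.\ $c_{k}=0$), and let $\Delta_{c}\subset\Delta$ be the resulting convex compact subset, nonempty by hypothesis. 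A standard extraction — for any feasible curve pick $T_{n}\to\infty$ realizing the limsup in (\ref{eq:Profit-Marg}), then pass to a weakly convergent subsequence of occupational measures on $[0,T_{n}]$ — identifies the infimum of the constrained problem with $\lambda^{*}:=\inf\{P/Q:(P,Q,c)\in\Delta_{c}\}$.

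Since $q>0$ on the compact $K\times U$, $P/Q$ is continuous on $\Delta_{c}$ and $\lambda^{*}$ is attained. The key geometric fact is that
\[
F^{*}=\{(P,Q,c)\in\Delta_{c}:P-\lambda^{*}Q=0\}
\]
is a \emph{face} of $\Delta_{c}$: if $\alpha y_{1}+(1-\alpha)y_{2}\in F^{*}$ with $y_{i}\in\Delta_{c}$ and $0<\alpha<1$, the inequalities $P_{i}-\lambda^{*}Q_{i}\ge 0$ sum to zero with equality, forcing both $y_{i}$ into $F^{*}$. Pick an extreme point $y^{*}$ of $F^{*}$; being extreme in a face of $\Delta_{c}$, it is also extreme in $\Delta_{c}$.

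Let $J\subseteq\{1,\ldots,m\}$ index the constraints active at $y^{*}$ (every equality constraint is in $J$). Since the remaining inequalities hold strictly at $y^{*}$ and are open conditions, a short perturbation argument shows $y^{*}$ is also an extreme point of $\Delta\cap\bigcap_{j\in J}\{c_{j}=0\}$, the intersection of $\Delta$ with $|J|$ codimension-one affine subspaces. Corollary \ref{Cor:cnv_intesect_halfspace_lemma} with $d=1$ then yields
\[
y^{*}=\sum_{i=1}^{N}\alpha_{i}z_{i},\qquad N\le|J|+1\le m+1,
\]
with $\alpha_{i}>0$ summing to one and each $z_{i}$ an extreme point of $\Delta$. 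By Theorem \ref{thm:occ_meas_realization} every $z_{i}$ is the $g$-average of an occupational measure $\mu_{i}$ carried by either a stationary solution or a Jordan-curve periodic solution (possibly with relaxed controls). The optimal trajectory is built by cycling through these $N$ building blocks: on the $k$-th cycle spend a fraction $\alpha_{i}$ of the time on trajectory $i$ (an integer number of periods for the periodic pieces) and connect consecutive legs by transition arcs of length at most $T_{K}$ from Definition \ref{def:unif_cont}. Since the transitions contribute $O(1)$ per cycle while the cycle lengths diverge, the occupational averages over $[0,T]$ converge to $y^{*}$, the averaged constraints are met, and the limsup value equals $\lambda^{*}$.

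For the final statement, when $m=1$ with the stronger pointwise constraint (\ref{eq:Constraint}), the decomposition has $N\le 2$ and $\alpha_{1}c_{1}(z_{1})+\alpha_{2}c_{1}(z_{2})\le 0$; relabel so that $c_{1}(z_{1})\le 0\le c_{1}(z_{2})$, place $z_{1}$ before $z_{2}$ inside each cycle, and start the $z_{1}$-leg at the time maximising the antiderivative of $C_{1}$ along $z_{1}$. Then $\int_{0}^{T}C_{1}\le 0$ for every $T$. The real obstacle is not the measure-theoretic decomposition, which is handed over by the two cited results, but the surgery in the last step: keeping every transition arc inside $K$ with bounds uniform in the cycle index, approximating relaxed controls along an infinite horizon in the sense of \cite[Section 7]{artbright}, and, for (\ref{eq:Constraint}), synchronising the periodic pieces so the running integral stays non-positive pointwise in $T$ rather than only on cycle averages.
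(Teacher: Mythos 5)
Your argument for the main statement is essentially the paper's own: pass to limiting occupational measures, realize them in $R^{m+2}$ as a convex compact set $\Delta$ (via Proposition \ref{prop:M_convex_compact}), locate the minimizer of $P/Q$ at an extreme point of the constrained set, decompose that extreme point into at most $m+1$ extreme points of $\Delta$ via Corollary \ref{Cor:cnv_intesect_halfspace_lemma}, identify those with stationary/Jordan-periodic solutions by Theorem \ref{thm:occ_meas_realization}, and realize the convex combination by a cyclic trajectory with growing legs and $O(1)$ transition arcs per cycle. Your face argument for $F^{*}$ and the reduction to the \emph{active} constraints before invoking the corollary is a minor (and arguably cleaner) variant of the paper's use of quasi-concavity of $z_{m+1}/z_{m+2}$ and its direct application of the corollary to the half-spaces $\{z_k\le 0\}$; the two are interchangeable here.

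The genuine gap is in the ``moreover, when $m=1$'' clause, and you half-admit it yourself by listing the pointwise synchronisation as ``the real obstacle'' without resolving it. Placing the $z_1$-leg first and time-translating it so that $\int_0^{s}\left(C_1-\mu_1\left(C_1\right)\right)dt\le0$ does control the running integral \emph{within} that leg, but it does not control the transitions: each transition arc can contribute up to $M_gT_K>0$ to $\int_0^{T}C_1$, and in the binding case $\mu^{*}\left(C_1\right)=0$ the net change of $\int_0^{T}C_1$ over one full cycle is then bounded only by $2M_gT_K$, not by $0$. These surpluses accumulate linearly in the cycle index, so your construction as stated eventually violates $\int_0^{T}C_1\le0$. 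The paper's fix is quantitative: it assumes $\mu_1\left(C_1\right)<0<\mu_2\left(C_1\right)$, sets $\alpha=2M_gT_K/\left(-\mu_1\left(C_1\right)\right)$, and prepends to each $z_1$-leg an extra $T_1\left\lceil \alpha/T_1\right\rceil$ of time (an integer number of periods, so the banked integral is exactly proportional to the average), which deposits at least $2M_gT_K$ of negative running integral per cycle before the transitions and the $z_2$-excursion can spend it; being $O(1)$ per cycle, this surcharge does not perturb the limiting occupational measure. Without this (or an equivalent) compensation term, the final claim $\int_0^{T}C_1\le0$ for all $T$ is not established.
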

The periodic or stationary optimal solution may be achieved only through
relaxed controls, however it can be approximated with regular controls,
in the same manner considered in \cite{artbright}.
\begin{rem}
The condition that $q\left(x,t\right)$ is positive can be relaxed
by requiring that its integral over every periodic curve is uniformly
bounded from below by a positive number. 
\end{rem}

\begin{rem}
\label{rem:min-max}The problem is stated as the minimization of the
limit superior, however, since the minimum is attained, we can replace
the limit superior by the limit. Also, replacing $p\left(x,u\right)$
by $-p\left(x,u\right)$ we see that our result generalize to the
maximization problems as well.
\end{rem}

\begin{rem}
\label{rem:gen_val}The minimization problem is stated for a weighted
average cost, however, it holds (see, Theorem \ref{thm:GEN_THM})
for a wider class of problems, such as the minimization of
\[
\inf_{\left(x\left(\cdot\right),u\left(\cdot\right)\right)}\limsup_{T\rightarrow\infty}\frac{1}{T}\left(\int_{0}^{T}p_{1}\left(x\left(t\right),u\left(t\right)\right)dt+\left|\int_{0}^{T}p_{2}\left(x\left(t\right),u\left(t\right)\right)dt\right|\right).
\]

\end{rem}

\section{\label{sec:Applications}Motivations \& Application: Shape Optimization
\& Partial Differential Equations}

In this section, we present an application of our result to the planar
shape optimization problem of the Cheeger set, and present qualitative
properties of the singular limit of the Cahn-Hilliard equation can
be obtained based on the corresponding infinite horizon optimization
problem with an averaged constraint.

\subsection{Shape Optimization: Cheeger Sets and Generalized Cheeger Sets\protect \\
}

The planar isoperimetric problem is the first known shape optimization
problem, where one seeks, amongst all sets $E\subset R^{2}$ with
a given perimeter, a set with maximal area. This problem was reformulated
to sets that maximize the ratio between their area and their perimeter,
in a given domain bounded $\Omega\subset R^{2}$. We denote the maximal
ratio by
\begin{equation}
v_{C}^{*}=\max_{E\subset\bar{\Omega}}V\left(E\right)=\max_{E\subset\bar{\Omega}}\frac{{Area}\left(E\right)}{{Length}\left(\partial E\right)}.\label{eq:Cheeger problem}
\end{equation}
The Cheeger constant is then defined by $1/v_{C}^{*}$. It is well
known that the maximum might not be unique, and that it is attained
by a set with boundary being a Jordan curve. The set maximizing (\ref{eq:Cheeger problem})
is called a Cheeger set, which we denote by 
\begin{equation}
E^{*}=\arg\max_{E\subset\bar{\Omega}}\frac{{Area}\left(E\right)}{{Length}\left(\partial E\right)},\;\;\partial E^{*}\mbox{ is a rectifiable Jordan curve}.\label{eq:Cheeger set Def}
\end{equation}

The Cheeger constant appear in the context of partial differential
equation. The Cheeger inequality bounds from below the largest eigenvalue
of the laplacian equation with homogenous boundary conditions by $\frac{1}{4}\left(v_{C}^{*}\right)^{-2}$
\cite{cheeger1970lower}. Cheeger sets are difficult to compute, and
only recently Kawohl and Lachand-Robert \cite{kawohl2006characterization}
provide an analytic characterization for convex planar domains. 

Cheeger sets have been generalized to generalized Cheeger sets, which
arise in applications in landslide modeling \cite{ionescu2005generalized}.
These sets maximize
\begin{equation}
v_{GC}^{*}=\max_{E\subset\Omega}V_{P;Q}\left(E\right)=\max_{E\subset\Omega}\frac{\int_{E}P\left(x\right)\mathcal{L}^{n}\left(dx\right)}{\int_{\partial^{*}E}Q\left(x\right)\mathcal{H}^{n-1}\left(dx\right)},\label{eq:Gen Cheeger problem}
\end{equation}
where $P\left(\cdot\right)$ and $Q\left(\cdot\right)$ are continuous,
$\mathcal{L}^{n}$ denotes the $n$-dimensional Lebegues measure,
and $\mathcal{H}^{n-1}$ denotes the $n-1$-dimensional Hausdorff
measure. Similar to the Cheeger set problem, the averaging property
implies that the optimal value is attained by a set with boundary
being a rectifiable Jordan curve.

The reduction to infinite-horizon optimization allows the use of tools
from infinite horizon optimization to solve these shape optimization
problems. In particular, numerical computation of the infinite horizon
problem in Proposition \ref{prop:cheeger_red} was performed to estimate
the Cheeger set and Cheeger constant, where they can be computed analytically
in order that the numerical result can be evaluated, using semi-infinite
linear programing (see, Gaitsgory, Rossomakhine \cite{gaitsgory2006linear}
and Gaitsgory, Rossomakhine and Thatcher \cite{Gaitsnumdisc}). Two
examples were computed: The 6x4 rectangle and the ovoid domain $\left(x^{2}+y^{2}\right)^{2}<x^{3}$.
In both cases the Cheeger constant was computed with accuracy of 4
digits, and for the rectangle the computed solution was within $2\cdot10^{-3}$
from the boundary of the theoretic Cheeger set.

In order to formulate problems (\ref{eq:Cheeger problem}) and (\ref{eq:Gen Cheeger problem})
as an infinite horizon control problems, we first express them by
an integral over a parametrized curve. 

Let $x\left(\cdot\right)=\left(x^{1}\left(\cdot\right),x^{2}\left(\cdot\right)\right)$
be a parametrized Jordan curve defined on $\left[0,T\right]$, with
interior $E$ and positive orientation. The area of $E$ can be expressed
as
\begin{equation}
{Area}\left(E\right)=\int_{0}^{T}x^{1}\left(t\right)\frac{dx^{2}}{dt}\left(t\right)dt,\label{eq:area formula}
\end{equation}
and the length of $\partial E$ as
\[
{Length}\left(\partial E\right)=\int_{0}^{T}\left|\frac{dx}{dt}\left(t\right)\right|dt.
\]
The integrals in expression (\ref{eq:Gen Cheeger problem}) can be
expressed, when $P\left(\cdot\right)\in C\left(\Omega\right)$, by
\[
\int_{E}P\left(x\right)\mathcal{L}^{n}\left(dx\right)=\int_{0}^{T}P_{1}\left(x\left(t\right)\right)\frac{dx^{2}}{dt}\left(t\right)dt,
\]
where 
\begin{equation}
P_{1}\left(\left(x^{1},x^{2}\right)\right)=\int_{0}^{x^{1}}P\left(\zeta,x^{2}\right)d\zeta,\label{eq:P1_def-1}
\end{equation}
and
\[
\int_{\partial^{*}E}Q\left(x\right)\mathcal{H}^{n-1}\left(dx\right)=\int_{0}^{T}Q\left(x\left(t\right)\right)\left|\frac{dx}{dt}\left(t\right)\right|dt.
\]

Thus, we have that
\[
V\left(E\right)=\frac{{Area}\left(E\right)}{{Length}\left(\partial E\right)}=\frac{\int_{0}^{T}x^{1}\left(t\right)\frac{dx^{2}}{dt}\left(t\right)dt}{\int_{0}^{T}\left|\frac{dx}{dt}\left(t\right)\right|dt},
\]
and 
\[
V_{P;Q}\left(E\right)=\frac{\int_{0}^{T}P_{1}\left(x\left(t\right)\right)\frac{dx^{2}}{dt}\left(t\right)dt}{\int_{0}^{T}Q\left(x\left(t\right)\right)\left|\frac{dx}{dt}\left(t\right)\right|dt}.
\]

Furthermore, when we consider solutions of (\ref{eq:ODE}), where
$f\left(x,u\right)=u$ and $U=\partial B\left(\mathbf{0},1\right)=\left\{ y\in R^{2}|\left|y\right|=1\right\} \subset R^{2}$,
the latter expressions reduce to 
\begin{equation}
V\left(E\right)=\frac{\int_{0}^{T}x^{1}\left(t\right)u^{2}\left(t\right)dt}{\int_{0}^{T}\left|f\left(x,u\right)\right|dt}=\frac{1}{T}\int_{0}^{T}x^{1}\left(t\right)u^{2}\left(t\right)dt,\label{eq:Cheeger set int identity}
\end{equation}
and
\[
V_{P;Q}\left(E\right)=\frac{\int_{0}^{T}P_{1}\left(x\left(t\right)\right)u^{2}\left(t\right)dt}{\int_{0}^{T}Q\left(x\left(t\right)\right)dt},
\]
where $u\left(t\right)=\left(u^{1}\left(t\right),u^{2}\left(t\right)\right)$. 

With these identities established, we verify the equivalence between
the shape optimization problem and the infinite horizon problem.
\begin{prop}
\label{prop:cheeger_red}Suppose the control system (\ref{eq:ODE})
defined by the vector field $f\left(x,u\right)=u$ with constraint
set $K=\bar{\Omega}$ and control set $U=\partial B\left(\mathbf{0},1\right)=\left\{ y\in R^{2}|\left|y\right|=1\right\} \subset R^{2}$.
The maximization problem 
\begin{equation}
v^{*}=\sup_{\left(x\left(\cdot\right),u\left(\cdot\right)\right)}\limsup_{T\rightarrow\infty}\frac{1}{T}\int_{0}^{T}x^{1}\left(t\right)u^{2}\left(t\right)dt,\label{eq:infinite-horizon cheeger}
\end{equation}
with respect to feasible solutions of (\ref{eq:ODE}), is attained
by a periodic curve with arc-length parametrization, such that its
image is a Jordan curve bounding a set $E$, which satisfies $v^{*}=V\left(E\right)=v_{C}^{*}$.\end{prop}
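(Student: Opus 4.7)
The strategy is to apply Theorem \ref{thm:main theorem} (in the maximization form given by Remark \ref{rem:min-max}) to the functional in (\ref{eq:infinite-horizon cheeger}), and then identify the resulting periodic trajectory with the boundary of a Cheeger set via the Green-type identity (\ref{eq:Cheeger set int identity}).

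First, one verifies the hypotheses of Theorem \ref{thm:main theorem}: the integrand $p(x,u)=x^{1}u^{2}$ is continuous, the denominator $q(x,u)\equiv 1$ is strictly positive, $K=\bar{\Omega}$ is compact, and $U=\partial B(\mathbf{0},1)$ is compact. Uniform controllability in the sense of Definition \ref{def:unif_cont} follows from the fact that the convex hull of $\{f(x_{0},u)\mid u\in U\}=U$ is the closed unit disk, which contains a neighborhood of the origin. Therefore the supremum in (\ref{eq:infinite-horizon cheeger}) is attained either by a stationary solution or by a periodic solution whose image is a Jordan curve. Stationary trajectories are excluded since $|u(t)|=1$ precludes $\dot{x}\equiv 0$ (even in the relaxed sense, since $|\int_{U}u\,\nu(du)|=1$ forces $\nu$ to be a point mass on $U$).

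Next, I would argue that the optimizer may be taken with arc-length parametrization and positive orientation, using only regular controls. For any feasible trajectory whose image is a Jordan curve of length $L$ bounding a region $E\subset\bar{\Omega}$ and traversed once in time $T_{p}$, the relaxed velocity $f(x,\nu)=\int_{U}u\,\nu(du)$ has norm at most $1$, so $T_{p}\ge L$. Green's theorem applied to (\ref{eq:area formula}) then gives
\[
\frac{1}{T_{p}}\int_{0}^{T_{p}}x^{1}(t)u^{2}(t)\,dt=\pm\frac{\mathrm{Area}(E)}{T_{p}},
\]
with sign depending on orientation. Hence the functional is maximized over all reparametrizations and orientations of a fixed image precisely when $T_{p}=L$ and the orientation is positive; that is, when $u(t)$ is the unit tangent (a regular control) and the trajectory traverses $\partial E$ once counterclockwise. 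Applied to the optimizer furnished by Theorem \ref{thm:main theorem}, this yields $v^{*}=\mathrm{Area}(E)/L=V(E)\le v_{C}^{*}$, with $E\subset\bar{\Omega}$ admissible in the Cheeger problem (\ref{eq:Cheeger problem}).

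The reverse inequality $v^{*}\ge v_{C}^{*}$ follows by constructing an admissible infinite-horizon trajectory that realizes $v_{C}^{*}$. By (\ref{eq:Cheeger set Def}), a Cheeger set $E^{*}$ exists with rectifiable Jordan boundary $\partial E^{*}$; parametrize $\partial E^{*}$ by arc length on $[0,L^{*}]$ with positive orientation to obtain a Lipschitz periodic map $x(\cdot)$ with $|\dot{x}|=1$ almost everywhere, and set $u(t)=\dot{x}(t)\in U$. This is a periodic feasible solution of (\ref{eq:ODE}) in $\bar{\Omega}$, and identity (\ref{eq:Cheeger set int identity}) gives the value $V(E^{*})=v_{C}^{*}$. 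Combining the two inequalities yields $v^{*}=v_{C}^{*}=V(E)$ and identifies the optimal Jordan curve as the boundary of a Cheeger set. I expect the main technical subtlety to be the arc-length/orientation reduction: one must rule out, through the reparametrization comparison above, both the possibility that the optimizer from Theorem \ref{thm:main theorem} carries genuinely relaxed controls and the possibility of an adverse orientation, as both scenarios would leave the optimizer's image consistent with Theorem \ref{thm:main theorem} but deliver a suboptimal value.
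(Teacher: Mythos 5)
Your proposal follows essentially the same route as the paper's proof: apply Theorem \ref{thm:main theorem} in its maximization form, discard stationary solutions, derive $v^{*}\le v_{C}^{*}$ from the comparison between the period and the length of the image curve together with the Green's-theorem identity (\ref{eq:area formula}), and derive $v^{*}\ge v_{C}^{*}$ by running the arc-length parametrization of $\partial E^{*}$ as a feasible periodic trajectory and invoking (\ref{eq:Cheeger set int identity}).

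The one step that fails is your exclusion of stationary solutions. You assert that $|u(t)|=1$ precludes $\dot{x}\equiv 0$ ``even in the relaxed sense, since $|\int_{U}u\,\nu(du)|=1$ forces $\nu$ to be a point mass on $U$.'' But a relaxed control is not subject to any constraint of the form $|\int_{U}u\,\nu(du)|=1$: the relaxed velocity set at each point is the convex hull of $U$, namely the closed unit disk, which contains the origin --- a fact you yourself use two sentences earlier to obtain uniform controllability, and again later when you bound the relaxed speed by $1$ rather than equating it to $1$. Taking for instance $\nu=\tfrac{1}{2}\delta_{(1,0)}+\tfrac{1}{2}\delta_{(-1,0)}$ gives $\dot{x}=0$, so relaxed stationary solutions exist at every point of $K$, and Theorem \ref{thm:main theorem} genuinely leaves open the possibility that the optimum is one of them. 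The repair is the value comparison the paper makes: a stationary relaxed solution has $\int_{U}u\,\nu(du)=0$, hence $\int_{U}x^{1}u^{2}\,\nu(du)=x^{1}\cdot 0=0$ and its value is $0$, while any positively oriented Jordan curve in $\bar{\Omega}$ has value $\mathrm{Area}(E)/T>0$; since $\Omega$ has nonempty interior the supremum is positive, so no stationary solution can attain it. With that substitution the rest of your argument goes through.
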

\begin{proof}
Theorem \ref{thm:main theorem} implies that the value is attained
either by a Jordan curve or by a stationary solution. The value of
a stationary solution is $0$, on the other hand, any Jordan curve,
with positive orientation, has positive value. Thus, the solution
is attained by $\left(x^{*}\left(\cdot\right),u^{*}\left(\cdot\right)\right)$
corresponding to a Jordan curve bounding a set $E_{J}$. The period
of this solution satisfies $T^{*}\ge{Length}\left(\partial E_{J}\right)$
(the inequality can be strict when relaxed controls are used). Thus,
by (\ref{eq:area formula}), $v^{*}={Area}\left(E_{J}\right)/T^{*}\le{Area}\left(E_{J}\right)/{Length}\left(\partial E_{J}\right)=V\left(E_{J}\right)\le v_{C}^{*}$,
which implies that $v^{*}\le v_{C}^{*}$.

We now prove that $v\ge v_{C}^{*}$. Let $E^{*}$ be given by (\ref{eq:Cheeger set Def}).
Applying (\ref{eq:Cheeger set int identity}) to a parametrized curve
with arc-length parametrization of $\partial E^{*}$ with positive
orientation, we conclude that $v^{*}\ge V\left(E^{*}\right)=v_{C}^{*}$,
and $v^{*}=V\left(E^{*}\right)=v_{C}^{*}$, which completes the proof.
\end{proof}
A Similar result holds for generalized Cheeger sets.
\begin{prop}
Suppose $P\left(\cdot\right),Q\left(\cdot\right)\in C\left(K\right)$
are positive, and $P_{1}\left(\cdot\right)$ is defined by (\ref{eq:P1_def-1}).
Consider the maximization problem

\begin{equation}
v^{*}=\sup_{\left(x\left(\cdot\right),u\left(\cdot\right)\right)}\limsup_{T\rightarrow\infty}\frac{\int_{0}^{T}P_{1}\left(x\left(t\right)\right)\frac{dx^{2}}{dt}\left(t\right)dt}{\int_{0}^{T}Q\left(x\left(t\right)\right)dt},\label{eq:infinite-horizon cheeger-1}
\end{equation}
with respect to feasible solutions of (\ref{eq:ODE}), given by the
vector field $f\left(x,u\right)=u$ and the sets $K=\bar{\Omega}$
and $U=\partial B\left(\mathbf{0},1\right)=\left\{ y\in R^{2}|\left|y\right|=1\right\} \subset R^{2}$.
The maximum is attained by a periodic curve with arc-length parametrization,
such that its image is a Jordan curve bounding a set $E$, which satisfies
$V\left(E\right)=v^{*}=v_{GC}^{*}$.
\end{prop}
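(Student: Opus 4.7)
The plan is to follow the strategy of Proposition~\ref{prop:cheeger_red}, adapted to accommodate the weighted integrals $P_{1}$ and $Q$. Setting $p(x,u)=-P_{1}(x)u^{2}$ (converting the maximization to a minimization via Remark~\ref{rem:min-max}) and $q(x,u)=Q(x)$, the positivity and continuity of $Q$ on the compact set $K=\bar{\Omega}$ meet the hypotheses of Theorem~\ref{thm:main theorem}. That theorem delivers an optimal feasible pair $(x^{*}(\cdot),u^{*}(\cdot))$ which is either stationary or periodic with image a Jordan curve, possibly with relaxed controls.

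Next, the stationary case is ruled out exactly as in Proposition~\ref{prop:cheeger_red}: a stationary solution has $dx^{*,2}/dt\equiv 0$ and therefore value zero, whereas any Jordan curve in $\bar{\Omega}$ traversed with positive orientation yields a strictly positive value since $P>0$. Thus $(x^{*},u^{*})$ is periodic with image bounding a region $E_{J}$, which after a possible reversal of orientation we take to be positively oriented.

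To identify the value geometrically, apply Green's theorem to the parametrized Jordan curve: the numerator equals $\int_{E_{J}}P(x)\,\mathcal{L}^{2}(dx)$. For the denominator, the control constraint $u\in U=\partial B(\mathbf{0},1)$, possibly relaxed, forces $|dx^{*}/dt|\le 1$, and since the Jordan curve is traversed exactly once the change-of-variables formula yields
\[
\int_{0}^{T^{*}}Q(x^{*}(t))\,dt\;\ge\;\int_{0}^{T^{*}}Q(x^{*}(t))\,\bigl|dx^{*}/dt(t)\bigr|\,dt\;=\;\int_{\partial E_{J}}Q\,d\mathcal{H}^{1}.
\]
Combining the two, $v^{*}\le V_{P;Q}(E_{J})\le v_{GC}^{*}$. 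For the reverse inequality, let $E^{*}$ realize $v_{GC}^{*}$, parametrize $\partial E^{*}$ by arc length with positive orientation, and take $u=dx/dt$ (so $|u|=1$); the resulting feasible pair has value exactly $V_{P;Q}(E^{*})=v_{GC}^{*}$, forcing $v^{*}\ge v_{GC}^{*}$ and hence equality. In particular the optimum is attained by an arc-length-parametrized Jordan curve bounding a set $E$ with $V_{P;Q}(E)=v^{*}=v_{GC}^{*}$.

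The one step I would single out as meriting care is the denominator inequality when $u^{*}$ is a genuinely relaxed control: one must confirm that relaxation can only reduce the pointwise speed $|dx^{*}/dt|$, and hence can only enlarge $\int Q(x^{*})\,dt$, so that the reduction to an arc-length parametrization is value-nondecreasing for the maximization. Beyond this, the argument is a direct extension of Proposition~\ref{prop:cheeger_red}, with the continuous positive weight $Q$ playing the role of the constant $1$ in the length denominator, and the primitive $P_{1}$ encoding $P$ via Green's theorem.
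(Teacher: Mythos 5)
Your proof is correct and is essentially the argument the paper intends: the paper gives no explicit proof for this proposition, stating only that it follows "similarly" to Proposition \ref{prop:cheeger_red}, and your writeup is precisely that adaptation (Theorem \ref{thm:main theorem} with $p=-P_{1}(x)u^{2}$, $q=Q(x)$, ruling out stationary solutions, Green's theorem for the numerator, the bound $|dx/dt|\le 1$ for the denominator, and arc-length parametrization of $\partial E^{*}$ for the reverse inequality). The point you flag about relaxed controls is handled exactly as you suggest, since $\left|\int_{U}u\,\nu(du)\right|\le 1$ and $Q>0$.
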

This result can be generalized to the case when $Q\left(x,u\right)$
depends on the control.
\begin{prop}
Suppose $P\left(\cdot\right)\in C\left(K\right)$ is positive, $P_{1}\left(x\right)$
is defined by (\ref{eq:P1_def-1}) and the integral \textup{over every
periodic} solution of $Q\left(x,u\right)\in C\left(K\times U\right)$
is bounded from below by some $\eta>0$. Consider the maximization
problem

\begin{equation}
v^{*}=\sup_{\left(x\left(\cdot\right),u\left(\cdot\right)\right)}\limsup_{T\rightarrow\infty}\frac{\int_{0}^{T}P_{1}\left(x\left(t\right)\right)\frac{dx^{2}}{dt}\left(t\right)dt}{\int_{0}^{T}Q\left(x\left(t\right),u\left(t\right)\right)dt},\label{eq:infinite-horizon cheeger-1-1}
\end{equation}
with respect to feasible solutions of (\ref{eq:ODE}) given by the
vector field $f\left(x,u\right)=u$ and the sets $K=\Omega$ and $U=\partial B\left(\mathbf{0},1\right)=\left\{ y\in R^{2}|\left|y\right|=1\right\} \subset R^{2}$.
The optimal solution is attained by a Jordan curve. Perhaps, with
relaxed controls.\end{prop}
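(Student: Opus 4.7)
The plan is to reduce this to Theorem \ref{thm:main theorem}, invoking the remark following Theorem \ref{thm:main constraints} that permits the denominator weight to fail pointwise positivity, provided its integral over every periodic orbit is uniformly bounded below by a positive constant. I would set $p(x,u) = P_{1}(x)\,u^{2}$ and $q(x,u) = Q(x,u)$, both continuous on $K\times U$. Since $f(x,u)=u$ gives $dx^{2}/dt = u^{2}(t)$ along feasible trajectories, the target in (\ref{eq:infinite-horizon cheeger-1-1}) has exactly the ratio-of-integrals form (\ref{eq:Profit-Marg}), and the maximization is equivalent to minimization after sign change by Remark \ref{rem:min-max}. The hypothesis that the integral of $Q$ over every periodic orbit is bounded below by $\eta>0$ is precisely the relaxed condition needed. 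Applying the theorem then yields an optimizer which is either a stationary solution or a periodic solution whose image is a Jordan curve, possibly with relaxed controls.

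The second step is to rule out the stationary alternative, exactly as in the proof of Proposition \ref{prop:cheeger_red}. A stationary optimum would correspond to some $x_{0}\in K$ with a relaxed control $\nu\in P(U)$ satisfying $f(x_{0},\nu) = \int_{U} u\,\nu(du) = 0 \in R^{2}$; in particular the second coordinate gives $\int_{U} u^{2}\,\nu(du) = 0$, so the numerator averages to $P_{1}(x_{0})\cdot 0 = 0$ and the ratio equals $0$. On the other hand, any sufficiently small positively oriented circle contained in $\Omega$ is a feasible Jordan curve, with strictly positive $P$-weighted area by the positivity of $P$ and with strictly positive denominator by the $\eta$-bound. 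Hence $v^{*}>0$, excluding the stationary case, and the supremum must be realized by a periodic solution with Jordan curve image.

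The main obstacle I anticipate is the careful verification that the uniform lower bound $\eta$ on $\int Q$ over every periodic orbit is strong enough to rigorously invoke the relaxed version of Theorem \ref{thm:main theorem}. In particular, one must ensure that the ratio remains well-defined and that the extreme-point argument on the realization $\Delta$ of the set of limiting occupational measures (Theorem \ref{thm:occ_meas_realization}) survives the failure of pointwise positivity of $q$. Once this uniformity is secured, the remaining steps are a routine adaptation of the argument already carried out in Proposition \ref{prop:cheeger_red}.
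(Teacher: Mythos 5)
Your reduction is essentially the paper's: take $p(x,u)=P_{1}(x)u^{2}$ and $q(x,u)=Q(x,u)$, apply the Poincar\'e--Bendixson-type theorem for the weighted-average value, and then discard the stationary alternative because a stationary (relaxed) solution has zero numerator while a positively oriented Jordan curve in $\Omega$ has positive value. The one step you explicitly leave as an ``anticipated obstacle'' --- checking that the $\eta$-bound on periodic orbits really licenses the relaxed version of the theorem --- is precisely what the paper's proof supplies, and it is closed by a short argument you could have finished: apply the unconstrained part of Theorem \ref{thm:GEN_THM} to the linear (hence quasi-concave) functional $\mu\mapsto\mu\left(Q\right)$, so that $v_{Q}^{*}=\inf_{\mu\in\mathcal{M}}\mu\left(Q\right)$ is attained by a periodic or stationary solution; by hypothesis that minimizer has strictly positive average of $Q$, so $\mu\left(Q\right)$ is bounded below by a positive constant on all of $\mathcal{M}$, the ratio $z_{1}/z_{2}$ is well defined and quasi-concave on the realization $\Delta$, and Theorem \ref{thm:GEN_THM} applies exactly as in the pointwise-positive case. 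With that addition your argument matches the paper's; your small-circle observation showing $v^{*}>0$ is just a slightly more explicit version of the paper's remark that the value of a stationary solution is $0$.
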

\begin{proof}
By our assumption on $Q\left(x,u\right)$ the minimization of 
\[
v_{Q}^{*}=\limsup_{T\rightarrow\infty}\frac{1}{T}\int_{0}^{T}Q\left(x\left(t\right),u\left(t\right)\right)dt
\]
is attained by a periodic solution. This implies that $v_{Q}^{*}\ge\eta>0$
and we can apply Theorem \ref{thm:GEN_THM}, and the proof follows
the observation that the value of a stationary solution is 0. 
\end{proof}

\subsection{Singular Limits of Partial Differential Equations\protect \\
}

Constrained optimization appears in the study of singular limits partial
differential equations, with steady state solutions minimizing an
energy functional with constrained. As an example, we study 1-dimensional
phase transitions, modeled by the van der Waals-Cahn-Hilliard theory.
We show how our main result can be employed to obtain qualitative
properties of the singular limit.

The limit of a solution of the Cahn-Hilliard equation 
\[
\frac{d}{dt}u_{\epsilon}\left(x,t\right)=\frac{d^{2}}{dx^{2}}\left(-\epsilon^{2}\frac{d^{2}}{dx^{2}}u_{\epsilon}\left(x,t\right)+W\left(u_{\epsilon}\left(\zeta,\tau\right)\right)\right),\quad u_{\epsilon}\left(\cdot,0\right)=u_{0}\left(\cdot\right),u_{\epsilon}\left(0,\cdot\right)=-1,\ u\left(1,\cdot\right)=1,
\]
where $\epsilon$ is a small parameter, is also the minimizer of the
van der Waals free energy given by
\begin{equation}
E_{\epsilon}=\int_{0}^{1}\left(W\left(u_{\epsilon}\left(x\right)\right)+\frac{\epsilon^{2}}{2}\left(u_{\epsilon}'\left(x\right)\right)^{2}\right)dx,\ u_{\epsilon}\left(0\right)=-1,\ u_{\epsilon}\left(1\right)=1,\label{eq:vanderWaals}
\end{equation}
where $u_{\epsilon}'\left(x\right)=\frac{d}{dx}u_{\epsilon}\left(x\right)$,
constrained by the conservation of mass
\begin{equation}
\int_{0}^{1}u_{\epsilon}\left(x\right)dx=\int_{0}^{1}u_{0}\left(x\right)dx=M.\label{eq:vdw_cnst}
\end{equation}

Our interest in this problem is in the limit of $u_{\epsilon}\left(\cdot\right)$
as $\epsilon$ goes to zero, and in $v^{**}=\liminf_{\epsilon\rightarrow0}E_{\epsilon}$.
Assuming sufficient (quadratic) growth conditions on $W\left(\cdot\right)$
it is easy to see that for all $\epsilon$ small enough, the minimizers
are uniformly bounded and uniformly Lipschitz.

By \cite[Proposition 6.1]{artbright} we can reduce the singular limit
problem to the constrained infinite horizon problem minimizing
\begin{equation}
v^{*}=\limsup_{T\rightarrow\infty}\frac{1}{T}\int_{0}^{T}\left(W\left(u\left(\zeta\right)\right)+\frac{\left(u'\left(\zeta\right)\right)^{2}}{2}\right)d\zeta,\label{eq:infhorvdW}
\end{equation}
with free initial condition and average constraint 
\begin{equation}
\lim_{T\rightarrow\infty}\frac{1}{T}\int_{0}^{T}W\left(u\left(\zeta\right)\right)d\zeta=M,\label{eq:infvdWcnt}
\end{equation}
with the same constraint set and Lipschitz constant as in the singular
limit. The reduction implies that $v^{*}=v^{**}$ and that the occupational
measure of solutions $u_{\epsilon}$ converge to an occupational measure
of an optimal solution of the infinite horizon equation. Note, that
\cite[Proposition 6.1]{artbright} considers the unconstrained case,
however, it can easily be extended to the constrained case we are
considering.

Artstein and Leizarowitz \cite{artstein2002singularly} show that
the minimum of unconstrained first order scalar Lagrangians is attained
by stationary solution. Thus, according to the methods in Section
\ref{sec:Proof-of-Main} there is an optimal measure which is a convex
combination of two measures corresponding to stationary solutions,
which for the double well potential, studied in Carr, Gurtin and Slemrod
\cite{carr1984structured}, is in fact unique. Thus, by \cite[Proposition 6.1]{artbright},
the singular limit is concentrated at two points as expected by \cite{carr1984structured}.

The Lagrangian in (\ref{eq:vanderWaals}) is a first order scalar
Lagrangian, for second order Lagrangians, with a single averaged constraint,
Theorem \ref{thm:main constraints} suggests, the appearance of approximate
``piecewise periodic'' optimal solutions for small $\epsilon$.
Namely, solutions that first approximately follow one periodic solution,
with velocity of order $O\left(\epsilon^{-1}\right)$, for a time
period of order $O\left(1\right)$, and then continues to approximately
follow a second periodic solution till $t=1$.

\section{\label{sec:Proof-of-Main}Proof of Main Result}

The existence of an optimal solution to our problem relays on a convexity
property of the set of limiting occupational measures $\mathcal{M}$
(see, Definition \ref{def:Limiting measures}). To employ this property,
we first restate the optimization problem using occupational measures,
reducing (\ref{eq:Profit-Marg}) to 
\[
v^{*}=\inf_{\mu\in\mathcal{M}}\mu\left(p\right)/\mu\left(q\right).
\]

The main theorems follows from the general result below. 
\begin{thm}
\label{thm:GEN_THM}Let $g\left(x,u\right)=\left(g^{1}\left(x,u\right),\dots,g^{n}\left(x,u\right)\right)\in C\left(K\times U,R^{n}\right)$,
$\Delta=\left\{ \mu\left(g\right)\in R^{n}|\mu\in\mathcal{M}\right\} $
be the realization of $\mathcal{M}$ by $g\left(x,u\right)$, and
$V\in C\left(\Delta\right)$ be quasi-concave. 
\begin{enumerate}
\item \label{enu:unconstrnd}The optimization problem
\begin{equation}
v^{*}=\inf_{\mu\in\mathcal{M}}V\left(\mu\left(g^{1}\right),\dots,\mu\left(g^{n}\right)\right),\label{eq:gen_opt_crit}
\end{equation}
attains its optimal solution by a measure corresponding to a periodic
or stationary solution of (\ref{eq:ODE}).
\item \label{cnstrnd}If system (\ref{eq:ODE}) is uniformly controllable
(Definition \ref{def:unif_cont}) then the constrained optimization
problem
\[
v^{*}=\inf_{\mu\in\mathcal{M^{C}}}V_{}\left(\mu_{i}\left(g^{1}\right),\dots,\mu_{i}\left(g^{n}\right)\right),
\]
where 
\[
\mathcal{M}^{C}=\left\{ \mu\in\mathcal{M}|\mu\left(g^{k}\right)\le0\mbox{ for }k=1,\dots m\right\} \neq\emptyset,
\]
attains its minimum by a measure $\mu^{*}\in\mathcal{M}^{C}$ such
that

\begin{enumerate}
\item $V\left(\mu^{*}\left(g^{1}\right),\dots,\mu^{*}\left(g^{n}\right)\right)=v^{*}$.
\item \label{enu:(b)}The measure $\mu^{*}$ is a convex combination of
$m+1$ occupational measures, corresponding to stationary or periodic
solutions of (\ref{eq:ODE}).
\item There is a feasible solution, corresponding to the measure $\mu^{*}$
, that alternates between the $m+1$ stationary or periodic solutions
from (b).
\item \label{enu:one_constraint}When $m=1$ a solution $\left(x^{*}\left(\cdot\right),u^{*}\left(\cdot\right)\right)$
of the form (c) exists, such that\textup{
\[
\int_{0}^{T}g^{1}\left(x^{*}\left(t\right),u^{*}\left(t\right)\right)dt\le0
\]
for every $T>0$.}
\end{enumerate}
\end{enumerate}
\end{thm}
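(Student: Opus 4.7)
The overall plan is to lift the problem from controls to occupational measures and then to its image $\Delta = \{\mu(g) : \mu \in \mathcal{M}\} \subset R^n$, where Proposition~\ref{prop:M_convex_compact} makes $\Delta$ convex and compact and Theorem~\ref{thm:occ_meas_realization} identifies every extreme point of $\Delta$ with a periodic or stationary solution. The geometric engine is the following elementary observation: a continuous quasi-concave $V$ on a convex compact set $S$ attains its infimum at an extreme point of $S$. Indeed, for any minimizer $y^* \in S$ Carath\'eodory's theorem gives $y^* = \sum_j \lambda_j e_j$ with the $e_j$ extreme points of $S$, and iterating the defining inequality of quasi-concavity yields $V(y^*) \ge \min_j V(e_j)$, so some $e_j$ also attains the infimum.

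Part~(\ref{enu:unconstrnd}) is then immediate: compactness produces a minimizer, the observation above produces a minimizing extreme point, and Theorem~\ref{thm:occ_meas_realization} lifts that extreme point to a periodic or stationary solution.

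For Part~(\ref{cnstrnd}), I would run the same argument on the convex compact set $\Delta^C = \Delta \cap \{y \in R^n : y^k \le 0,\ k = 1,\ldots,m\}$. Let $y^*$ be a minimizing extreme point of $\Delta^C$ and let $I \subset \{1,\ldots,m\}$ collect the constraints active at $y^*$. Because the inactive constraints hold strictly, a neighborhood of $y^*$ in $\Delta \cap \bigcap_{k \in I}\{y^k = 0\}$ sits inside $\Delta^C$, so $y^*$ is also an extreme point of this intersection of $\Delta$ with $|I|$ hyperplanes. Corollary~\ref{Cor:cnv_intesect_halfspace_lemma} with $d = 1$ then writes $y^*$ as a convex combination of at most $|I| + 1 \le m + 1$ extreme points of $\Delta$, each corresponding, through Theorem~\ref{thm:occ_meas_realization}, to a periodic or stationary solution; lifting back through the linear map $\mu \mapsto \mu(g)$ yields the representation demanded in (b). The alternating trajectory in (c) is built by concatenating long runs of the $m+1$ orbits in proportions $\alpha_i$, splicing successive segments with feasible controllability bridges of bounded time $T_K$ that become asymptotically negligible relative to the growing cycle length; weak convergence of the occupational measures then forces the averaged constraints to hold in the limit and the value to equal $v^*$.

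The genuinely delicate point is (\ref{enu:one_constraint}), which requires the cumulative, not merely averaged, constraint. Here $\mu^* = \alpha_1 \mu_1^* + \alpha_2 \mu_2^*$ with $\alpha_1 \mu_1^*(g^1) + \alpha_2 \mu_2^*(g^1) \le 0$; the only non-trivial case is $\mu_1^*(g^1) < 0 < \mu_2^*(g^1)$. My plan is a deficit-first cycle: run $\mu_1^*$ long enough to accumulate a debt strictly larger than the forthcoming increase along $\mu_2^*$ plus the bounded intra-period oscillations of $g^1$ and the bounded contributions of the controllability bridges, then run $\mu_2^*$ for its allotted stretch, and repeat. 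This forces a tiny over-spending on $\mu_1^*$ per cycle, and the main obstacle is to choose a sequence of cycles of growing length with buffers shrinking to zero at exactly the right rate, so that the occupational measures still converge to $\mu^*$ and the value to $v^*$ while $\int_0^T g^1 \le 0$ is preserved at every finite time.
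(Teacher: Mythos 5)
Your proposal follows essentially the same route as the paper: quasi-concavity pushes the minimum to an extreme point, Theorem \ref{thm:occ_meas_realization} and Corollary \ref{Cor:cnv_intesect_halfspace_lemma} give the $(m+1)$-fold decomposition (your active-constraint reduction is a slightly cleaner way to invoke the corollary, which is stated for affine subspaces rather than halfspaces), and the trajectory is assembled by concatenating growing runs of the periodic orbits with $T_K$-bounded bridges. The one ``obstacle'' you flag in (d) is not really one: the paper uses a \emph{fixed} per-cycle buffer of extra time $\alpha=2M_{g}T_{K}/\left(-\mu_{1}\left(g^{1}\right)\right)$ on the deficit orbit (after a time translation putting its partial integrals below the linear trend), which covers the bridge contributions outright, and its relative weight vanishes automatically because the cycle lengths grow like $n$ -- no delicate shrinking rate is needed.
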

\begin{proof}
The function $V\left(\cdot\right)$ is quasi-concave, thus, its minimum
in $\Delta$ is attained in one of its extreme points and Theorem
\ref{thm:occ_meas_realization} implies the result for the unconstrained
case.

For the constrained problem, we observe that set $\Delta_{C}=\left\{ \mu\left(g\right)\in R^{n}|\mu\in\mathcal{M}_{C}\right\} $,
can be expressed as 
\begin{equation}
\Delta_{C}=\Delta\cap\Pi_{1}\cap\cdots\cap\Pi_{m},\label{eq:cnstset_def}
\end{equation}
where for every $k=1,\dots,m$ we define the subspace 
\[
\Pi_{k}=\left\{ \left(z_{1},\dots,z_{n}\right)\in R^{n}|z_{k}\le0\right\} .
\]
The assumption of uniform controllability, implies that $\Delta$
is compact and convex (see Proposition \ref{prop:M_convex_compact}),
and so is $\Delta_{C}$. The function $V\left(\cdot\right)$ is quasi-concave,
so it attains its minimum in an extreme point of $\Delta_{C}$. By
Corollary \ref{Cor:cnv_intesect_halfspace_lemma}, every extreme point
of $\Delta_{C}$ can be expressed as a convex combination of $m+1$
extreme points of $\Delta$, which by Theorem \ref{thm:occ_meas_realization}
correspond to a stationary or periodic solution. This completes the
proof of (a) and (b).

Suppose that $\mu^{*}=\sum_{j=1}^{m+1}\lambda_{j}\mu_{j}^{p}$, is
the minimizing measure, and that each measure $\mu_{j}^{p}$ corresponds
to the periodic or stationary solution $\left(x_{j}^{p}\left(\cdot\right),u_{j}^{p}\left(\cdot\right)\right)$.
The uniform controllability implies that there exists a $T_{K}$ that
bounds the time it takes to steer between any two points in $K$.
We now construct a solution corresponding to the measure $\mu^{*}$
that alternates between the solutions $\left(x_{j}^{p}\left(\cdot\right),u_{j}^{p}\left(\cdot\right)\right)$.

We start at $x_{1}^{p}\left(0\right)$, the initial point of the first
curve, and for $n=1,2,\dots$ we do the following:
\begin{enumerate}
\item for $j=1,\dots,m$ follow the $j$'th curve for $n\lambda_{j}$ units
the time, then steer, in time $\le T_{K}$, to $x_{j+1}^{p}\left(0\right)$,
the initial point of the $\left(j+1\right)$'th curve.
\item Follow the $\left(m+1\right)$'th curve for $n\lambda_{m+1}$ units
of time, then steer, in time $\le T_{K}$, back to $x_{1}^{p}\left(0\right)$,
the initial point of the first curve.
\end{enumerate}
When $m=1$, let us first consider the case when the optimal measure
$\mu^{*}$ is attained by a periodic solution. Let $\left(x^{*}\left(\cdot\right),u^{*}\left(\cdot\right)\right)$
be the corresponding curve and $T^{*}$ its period (if it is stationary
we attribute it a period of $T^{*}=1$). Thus, 
\[
\mu^{*}\left(g^{1}\right)=\frac{1}{T^{*}}\int_{0}^{T^{*}}g^{1}\left(x^{*}\left(t\right),u^{*}\left(t\right)\right)dt\le0.
\]
Setting $\tau$ as the points that maximizes the periodic function
\[
F\left(s\right)=\int_{0}^{s}\left(g^{1}\left(x^{*}\left(t\right),u^{*}\left(t\right)\right)-\mu^{*}\left(g^{1}\right)\right)dt,
\]
and translating, in time, the curve $\left(x^{*}\left(\cdot\right),u^{*}\left(\cdot\right)\right)$
by $\tau$, assures us that $F\left(\cdot\right)$ is non positive,
and that the constraint is satisfied.

Otherwise, the optimal measure is of the form $\mu^{*}=\lambda\mu_{1}+\left(1-\lambda\right)\mu_{2}$,
where each $\mu_{j}\in\mathcal{M}$ corresponds to a periodic trajectory
$\left(x_{j}^{p}\left(\cdot\right),u_{j}^{p}\left(\cdot\right)\right)$,
with period $T_{j}$ (where we attribute stationary solutions with
period 1). In this case, we can assume that $\mu_{1}\left(g^{1}\right)<0<\mu_{2}\left(g^{1}\right)$.

Applying a time translation, we assume that for every $s>0$  the first solution satisfies the integral bound  
$\int_{0}^{s}\left(g^{1}\left(x_{1}^{p}\left(t\right),u_{1}^{p}\left(t\right)\right)-\mu_{1}\left(g^{1}\right)\right)dt\le0$.

In order to accommodate our previous construction to a non averaged
constraint, we take into consideration the transient parts, between
the two curves. To this end we define $\alpha=2M_{g}T_{K}/\left(-\mu_{1}\left(g^{1}\right)\right)$,
where $M_{g}$ bounds $g^{1}\left(x,u\right)$ in $K\times U$.

We start our optimal curve at the point $x_{1}^{p}\left(0\right)$,
then for $n=1,2,\dots$ we repeat the following:
\begin{enumerate}
\item Follow the curve $\left(x_{1}^{p}\left(\cdot\right),u_{1}^{p}\left(\cdot\right)\right)$
for time $T_{1}\left\lceil \frac{\alpha}{T_{1}}\right\rceil +T_{1}\left\lceil \frac{\lambda n}{T_{1}}\right\rceil $
time, where $\left\lceil \beta\right\rceil $ denotes the smallest
integer larger or equal to $\beta$. (The first term takes into account
the transitions between the first and second curves, and is needed
to make sure the constraint holds.).
\item Steers to $x_{2}^{p}\left(0\right)$, in time $\le T_{K}$.
\item Follow the curve $\left(x_{2}^{p}\left(\cdot\right),u_{2}^{p}\left(\cdot\right)\right)$
for $T_{2}\left\lceil \frac{\left(1-\lambda\right)n}{T_{2}}-1\right\rceil $
time.
\item Steers, in time $\le T_{K}$, back to the point $x_{1}^{p}\left(0\right)$.
\end{enumerate}
\end{proof}
\begin{rem}
\label{rem: eq_cnst}The definition of $\sM_{C}$ in Theorem \ref{thm:GEN_THM}
can be relaxed to include equalities as well as inequalities, setting
\[
\sM_{C}=\sM_{1}\cap\cdots\cap\sM_{m},
\]
where for every $k=1,\dots,m$ either $\sM_{k}=\left\{ \mu\in\sM|\mu\left(g_{k}\right)\le0\right\} $
or $\sM_{k}=\left\{ \mu\in\sM|\mu\left(g_{k}\right)=0\right\} $.
\end{rem}
We now present the proofs of Theorems \ref{thm:main theorem} and
\ref{thm:main constraints}.
\begin{proof}
[Proof of Theorem \ref{thm:main theorem}]Let $g\left(x,u\right)\in C\left(K\times U,R^{2}\right)$
be defined by $g^{1}\left(x,u\right)=p\left(x,u\right)$ and $g^{2}\left(x,u\right)=q\left(x,u\right)$.
Set $m=0$ and $V\left(\left(z_{1},z_{2}\right)\right)=z_{1}/z_{2}$.
Since $q\left(x,u\right)>0$, the function $V\left(\cdot\right)$
is quasi-concave in its domain, thus, applying Theorem \ref{thm:GEN_THM}
completes the proof. 
\end{proof}

\begin{proof}
[Proof of Theorem \ref{thm:main constraints}] Let $g\left(x,u\right)\in C\left(K\times U,R^{2+m}\right)$
be defined by $g^{k}\left(x,u\right)=C_{k}\left(x,u\right)$ for $k=1,\dots,m$
and $g^{m+1}\left(x,u\right)=p\left(x,u\right)$, $g^{m+2}\left(x,u\right)=q\left(x,u\right)$.
Set $n=m+2$ and $V\left(\left(z_{1},z_{2},\dots,z_{m+2}\right)\right)=z_{m+1}/z_{m+2}$.
The function $V\left(\cdot\right)$ is quasi-concave in its domain,
thus, combining Theorem \ref{thm:GEN_THM} and Remark \ref{rem: eq_cnst}
completes the proof. 
\end{proof}
The necessity to alternate between stationary solutions of the constrained
optimization problem is depicted in the following scalar examples.
\begin{example}
\label{ex:1}Suppose the control system (\ref{eq:ODE}) is defined
by $f\left(x,u\right)=u$ and $K=U=\left[1,-1\right]$. The minimization
of 
\[
v^{*}=\limsup_{T\rightarrow\infty}\frac{1}{T}\int_{0}^{T}\left(1-x\left(t\right)-\left(x\left(t\right)\right)^{2}\right)dt,
\]
with respect to feasible solutions satisfying $\int_{0}^{T}x\left(t\right)dt\le0$
for every $T>0$, attains the optimal value $v^{*}=0$ by a solution
that alternates between $-1$ and $1$. This follows from the fact
that substituting the constraint in the value, we see that for every
$T$ 
\begin{equation}
\int_{0}^{T}\left(1-x\left(t\right)-\left(x\left(t\right)\right)^{2}\right)dt=\int_{0}^{T}\left(1-\left(x\left(t\right)\right)^{2}\right)dt-\int_{0}^{T}x\left(t\right)dt\ge T-\int_{0}^{T}\left(x\left(t\right)\right)^{2}dt\ge0,\label{eq:value larger than 0-1}
\end{equation}
which implies $v^{*}\ge0$. Moreover, one concludes from (\ref{eq:value larger than 0-1})
that there is no periodic optimal solution.
\end{example}

\begin{example}
When the value function $V\left(\cdot\right)$ in Theorem \ref{thm:GEN_THM}
is not concave, the necessity to alternate between stationary solutions
is depicted in the following example. The minimization of
\[
v^{*}=\limsup_{T\rightarrow\infty}\frac{1}{T}\left(\left|\int_{0}^{T}x\left(t\right)dt\right|-\int_{0}^{T}\left|x\left(t\right)\right|dt\right),
\]
with respect to the system in Example \ref{ex:1}, is not attained
by a periodic solution. Clearly $v^{*}\ge-1$, and the minimum is
attained by a solution alternating between the points $+1$ and $-1$.
Notice that the function $V\left(\cdot\right)$ is convex and the
maximization problem is attained by the stationary solution $x\equiv0$.
(as the triangle inequality implies that $v^{*}\le0$ for every feasible
solution).
\end{example}

The author wishes to thank Sergey Rossomakhine for conducting the numerical computations of the Cheeger set and Cheeger constant. 

\bibliographystyle{plain}
\bibliography{citations}

\end{document}